\def\zz{{\bf Z}}
\def\qq{{\bf Q}}
\def\rr{{\bf R}}
\def\co{\colon\thinspace}
\def\calc{\mathcal{C}}
\def\cs{\mathop{\#}}
\newcommand{\fig}[2] { \includegraphics[scale=#1]{#2} }
\newtheorem{theorem}{Theorem}
\newtheorem{corollary}[theorem]{Corollary}
\newtheorem{prop}[theorem]{Proposition}
\theoremstyle{theorem}
\newtheorem{definition}[theorem]{Definition}
\theoremstyle{definition}
\theoremstyle{definition}
\numberwithin{equation}{section}
\begin{document}
\title{The stable 4--genus of knots }
\author{Charles Livingston}
\thanks{This work was supported by a grant  from the NSF  }
\thanks{\today}
\address{Department of Mathematics, Indiana University, Bloomington, IN 47405}
\email{livingst@indiana.edu}
\keywords{}


\maketitle

\begin{abstract} We define the stable 4--genus of a knot $K \subset S^3$, $g_{st}(K)$, to be the limiting value of $g_4(nK)/n$, where $g_4$ denotes the 4--genus and $n$ goes to infinity.  This induces a seminorm on the rationalized knot concordance group, $\calc_\qq = \calc \otimes \qq$.  Basic properties of $g_{st}$ are developed, as are examples focused on understanding the unit ball for $g_{st}$ on specified subspaces of $\calc_\qq$.  Subspaces spanned by torus knots are used to illustrate the distinction between the smooth and topological categories.  A final example is given in which Casson-Gordon invariants are used to demonstrate that $g_{st}(K)$ can be a noninteger.\end{abstract}
\section{Summary.}

In order to better understand the  smooth 4--genus of  knots $K \subset S^3$, denoted $g_4(K)$, we introduce and study here the {\it stable 4--genus}, $$\displaystyle{g_{st}(K) = \lim_{n\to \infty} g_4(nK)/n}.$$  
As will be seen in Section~\ref{algprelims}, the existence of the limit and its basic properties follow from the  subadditivity of $g_4$ as a  function on the classical knot concordance group $\calc$; that is,     $g_4(K \cs J) \le g_4(K) + g_4(J)$ for all $K$ and $J$.  

Neither classical knot invariants nor the invariants that arise from   Heegaard-Floer theory~\cite{os} or   Khovanov homology~\cite{ra} can be used to demonstrate that $g_{st}(K) \notin \zz$ for some $K$.   One result of this paper is the construction of a knots $K$ for which $g_{st}(K)$ is close to $\frac{1}{2}$.   Perhaps of greater interest  is the exploration of the new perspective on the 4--genus and knot concordance offered from the stable viewpoint.  In particular, a number of interesting and challenging new questions arise naturally.  For example, we note that finding a knot $K$ with $0 < g_{st}(K) <\frac{1}{2}$ is closely related to the existence of torsion in $\calc$ of order greater than 2.  We will also consider the distinction between the smooth and topological categories from the perspective of the stable genus. 

\vskip.1in
\noindent{\it Acknowledgements}  Thanks are  due to Pat Gilmer for conversations related to his results on 4--genus, which play a key role in Section~\ref{seconehalf}.   Thanks are also due to  Ian Agol  and  Danny Calegari for discussing with me the analogy between the stable genus and the stable commutator length, described in Section~\ref{secquestions}.   

\section{Algebraic preliminaries.}\label{algprelims}

   The existence of the limiting value and its basic properties are summarized in the following general theorem.

\begin{theorem}\label{limthm} Let $\nu \co G \to \rr_{\ge 0}$ be a subadditive function on an abelian group $G$.  Then:

 \begin{enumerate}
 \item The limit $\nu_{st}(g) = \lim_{n \to \infty} \nu(ng)/n$ exists for all $g \in G$.
 
\item  The function $\nu_{st}\co G \to \rr_{\ge 0}$ is  subadditive and multiplicative: $\nu_{st}(ng) =  n \nu_{st}(g)$ for $n \in \zz_{\ge 0}$.  If $\nu(g) = \nu(-g)$ for all $g$, then $\nu_{st}(g) = \nu_{st}(-g)$ for all $g$.

\item There is a factorization of $\nu_{st}$  through $G_\qq = G \otimes \qq$.  That is, there is a multiplicative, subadditive function $\overline{\nu}_{st} \co G_\qq \to \rr_{\ge 0}$ such that $\nu_{st} = \overline{\nu}_{st} \circ i$ where $i\co G \to G_{\qq}$ is the map $g \to g \otimes 1$.

 \end{enumerate}

\end{theorem}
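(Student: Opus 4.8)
The plan is to dispatch the three parts in turn; the only genuine content is Fekete's subadditive lemma for part (1) and the treatment of torsion for part (3), everything else being formal manipulation of limits.

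For part (1), I would fix $g$ and set $a_n = \nu(ng)$. Since $(m+n)g = mg + ng$, subadditivity of $\nu$ gives $a_{m+n} \le a_m + a_n$, so $(a_n)$ is a nonnegative subadditive sequence of reals. I would then invoke (or reprove in a few lines) Fekete's lemma: $\lim_n a_n/n$ exists and equals $L := \inf_n a_n/n \ge 0$. Concretely, given $\epsilon > 0$ pick $m$ with $a_m/m < L + \epsilon$; for arbitrary $n = qm + r$ with $0 \le r < m$, iterated subadditivity gives $a_n \le q\,a_m + a_r$, hence $a_n/n \le (qm/n)(a_m/m) + a_r/n$, whose $\limsup$ as $n \to \infty$ is at most $a_m/m < L + \epsilon$. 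As $a_n/n \ge L$ always, the limit is $L$, finite and nonnegative, so $\nu_{st}(g)$ is well-defined in $\rr_{\ge 0}$.

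For part (2), subadditivity follows by dividing $\nu(n(g+h)) \le \nu(ng) + \nu(nh)$ by $n$ and passing to the limit. Multiplicativity is $\nu_{st}(mg) = \lim_n \nu(nmg)/n = m\lim_n \nu(nmg)/(nm)$, where the last limit is taken along the subsequence $\{nm\}$ of the convergent sequence $\nu(kg)/k$ and so equals $\nu_{st}(g)$; the case $m = 0$ is immediate since $\nu(0)/n \to 0$. If $\nu$ is symmetric, then $\nu(n(-g)) = \nu(-(ng)) = \nu(ng)$ for every $n$, giving $\nu_{st}(-g) = \nu_{st}(g)$.

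Part (3) is where the one subtlety lies, and I would isolate it first: \emph{$\nu_{st}$ vanishes on torsion}. Indeed, if $mg = 0$ with $m > 0$ then $0 = \nu_{st}(0) = \nu_{st}(mg) = m\,\nu_{st}(g)$, so $\nu_{st}(g) = 0$; consequently, whenever $a - b$ is torsion, subadditivity gives $\nu_{st}(a) \le \nu_{st}(b) + \nu_{st}(a-b) = \nu_{st}(b)$ and symmetrically, so $\nu_{st}(a) = \nu_{st}(b)$. Since $\ker(i\co G \to G_\qq)$ is exactly the torsion subgroup, this is precisely the information needed. Next I would clear denominators: any $x \in G_\qq$ can be written as $x = \sum_j g_j \otimes (p_j/q)$ over a common denominator $q > 0$, so setting $h = \sum_j p_j g_j \in G$ yields $x = \tfrac1q\, i(h)$, and I define $\overline{\nu}_{st}(x) = \tfrac1q\,\nu_{st}(h)$.

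The main obstacle is showing this is \textbf{well-defined}: if $\tfrac1q\, i(h) = \tfrac1{q'}\, i(h')$ then $i(q'h) = i(qh')$, so $q'h - qh'$ is torsion, and the torsion bound above gives $\nu_{st}(q'h) = \nu_{st}(qh')$, i.e. $q'\nu_{st}(h) = q\,\nu_{st}(h')$, which is exactly $\tfrac1q\nu_{st}(h) = \tfrac1{q'}\nu_{st}(h')$. Once this is in hand the rest is routine: taking $q = 1$ gives $\overline{\nu}_{st}\circ i = \nu_{st}$; writing $r = a/b$ with $a,b \in \zz_{\ge 0}$, $b > 0$ and $x = \tfrac1q i(h)$ gives $rx = \tfrac1{bq} i(ah)$, so $\overline{\nu}_{st}(rx) = \tfrac{a}{bq}\nu_{st}(h) = r\,\overline{\nu}_{st}(x)$ (yielding the asserted multiplicativity as the special case of $\qq_{\ge 0}$-homogeneity); and placing $x, y$ over a common denominator and applying part (2) gives subadditivity. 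The crucial insight throughout part (3) is simply that $\ker i$ is torsion and $\nu_{st}$ annihilates torsion, so the assigned value $\tfrac1q\nu_{st}(h)$ is independent of the representative.
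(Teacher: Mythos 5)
Your proposal is correct and takes essentially the same route as the paper: your part (1) is precisely the paper's appendix argument (Fekete's lemma via the division algorithm, with the limit identified as the infimum of $\nu(ng)/n$), and parts (2)--(3), which the paper dismisses with ``the rest of the theorem follows easily,'' you fill in with the expected formal arguments. Your treatment of part (3) --- observing that $\nu_{st}$ annihilates torsion, that $\ker i$ is exactly the torsion subgroup, and then checking well-definedness of $\overline{\nu}_{st}(x) = \tfrac{1}{q}\nu_{st}(h)$ --- correctly supplies the one genuinely non-trivial detail that the paper leaves to the reader.
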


\begin{proof} The proof of (1) is a standard elementary exercise using the consequence of subadditivity, $\nu(ng) \le n\nu(g)$ for all $g$.  In the  appendix to this paper we summarize a proof.  The rest of the theorem follows easily.  
\end{proof}

A {\it seminorm} on a vector space is a nonnegative multiplicative and subadditive function.  Thus, $\overline{{\nu}}_{st}$ is a seminorm on $G_\qq$.\vskip.1in

\noindent{\bf Notation}  We will usually drop the overbar notation; that is, we will denote both the functions $\nu_{st}$ on $G$ and $\overline{\nu}_{st}$ on $G_\qq$ by $\nu_{st}$ and be clear as to what domain we are using.
\vskip.1in

In our applications we will want to bound $g_{st}$ using homomorphisms on the  concordance group, in particular signatures, the Ozsv\'ath-Szab\'o invariant $\tau$ and the Khovanov-Rasmussen invariant  $s$.  The needed algebraic observation is the following, the proof of which the reader can readily provide.

\begin{theorem}\label{bdthm} If $\sigma \co G \to  \rr$ is a homomorphism and $\nu(g) \ge |\sigma(g)|$ for all $g \in G$, then:

\begin{enumerate}
\item $|\sigma| \co G \to \rr_{\ge 0}$ is subadditive.

\item The stable function ${| \sigma |}_{st} $ satisfies ${| \sigma |}_{st}=  | \sigma |$ and is a seminorm on $G_\qq$.

\item ${\nu}_{st}(x) \ge |\sigma(x)|$ for all $x \in G_{\qq}$.

\end{enumerate}

\end{theorem}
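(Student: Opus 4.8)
The plan is to treat the three parts in turn, leaning on Theorem~\ref{limthm} so as to avoid re-deriving the existence and behaviour of the stable function. Throughout I would write $\sigma_\qq \co G_\qq \to \rr$ for the unique $\qq$--linear extension of $\sigma$; this exists because $\rr$, viewed as an abelian group, is torsion-free and divisible, i.e.\ a $\qq$--vector space. The symbol $|\sigma(x)|$ for $x \in G_\qq$ appearing in part (3) is then to be read as $|\sigma_\qq(x)|$.

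For (1) the only input needed is additivity of $\sigma$. Given $g,h \in G$ I would apply the triangle inequality in $\rr$ to $\sigma(g+h) = \sigma(g)+\sigma(h)$, obtaining $|\sigma(g+h)| \le |\sigma(g)| + |\sigma(h)|$, which is precisely subadditivity of $|\sigma|$; nonnegativity is immediate. This also certifies that $|\sigma|$ is a legitimate input $\nu$ for Theorem~\ref{limthm}.

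For (2) I would compute the stable function straight from its definition. Since $\sigma$ is a homomorphism, $\sigma(ng) = n\sigma(g)$, so $|\sigma|(ng)/n = |\sigma(g)|$ for every $n \ge 1$: the defining sequence is constant, whence $|\sigma|_{st}(g) = |\sigma(g)|$, giving ${|\sigma|}_{st} = |\sigma|$ on $G$. That ${|\sigma|}_{st}$ descends to a seminorm on $G_\qq$ is then exactly Theorem~\ref{limthm}(2)--(3) applied to the subadditive function $|\sigma|$; multiplicativity over $\zz_{\ge 0}$ and subadditivity are automatic, and the symmetry hypothesis $\nu(g)=\nu(-g)$ of Theorem~\ref{limthm}(2) holds here because $|\sigma|(-g) = |{-}\sigma(g)| = |\sigma(g)|$, which is what upgrades multiplicativity to genuine homogeneity.

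For (3) I would first prove the inequality on the image of $G$ and then clear denominators. From the pointwise bound $\nu \ge |\sigma|$ on $G$ we get $\nu(ng)/n \ge |\sigma|(ng)/n$ for all $n \ge 1$; letting $n \to \infty$ and invoking part (2) gives $\nu_{st}(g) \ge {|\sigma|}_{st}(g) = |\sigma(g)|$ for each $g \in G$. For a general $x \in G_\qq$, I would choose a positive integer $m$ with $mx = i(g)$ for some $g \in G$, and then use multiplicativity of $\nu_{st}$ together with $\qq$--linearity of $\sigma_\qq$: $m\,\nu_{st}(x) = \nu_{st}(i(g)) \ge |\sigma(g)| = |\sigma_\qq(i(g))| = |m\,\sigma_\qq(x)| = m\,|\sigma_\qq(x)|$, and dividing by $m$ yields the claim. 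The only point demanding care, rather than a real obstacle, is the bookkeeping in this last step: one should pass the $G$--level inequality to $G_\qq$ by clearing denominators through the already-established multiplicativity of both $\nu_{st}$ and $\sigma_\qq$, rather than trying to take limits directly over $G_\qq$. Once the constancy of the defining sequence in part (2) is observed, every remaining step is routine.
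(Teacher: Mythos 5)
Your proof is correct and complete. The paper itself offers no proof of Theorem~\ref{bdthm} --- it is explicitly left as an exercise (``the proof of which the reader can readily provide'') --- and your argument, including the observation that the sequence $|\sigma(ng)|/n$ is constant in part (2) and the denominator-clearing step via multiplicativity of $\nu_{st}$ and $\qq$--linearity of $\sigma_\qq$ that transfers the inequality from $G$ to $G_\qq$ in part (3), is exactly the routine verification the paper intends.
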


 A seminorm can be completely understood via its unit ball.
 
 \begin{definition} If ${\nu}$ is a seminorm on a vector space $V$, then $B_{{\nu}} = \{x \in V\ |\  {\nu}(x) \le 1\}$.
 \end{definition}

\begin{theorem} Let $\nu$ be a subadditive nonnegative function on an abelian group $G$ and let $\sigma$ be a real-valued homomorphism on $G$.  
\begin{enumerate}
\item  $B_{{\nu}_{st}}$ and   $B_{{|\sigma|}}$ are convex subsets of $G_\qq$.

\item If $\nu(g) \ge |\sigma(g)|$ for all $g \in G$, then  $B_{{\nu}_{st}} \subset B_{{|\sigma|}}$

\end{enumerate}

\end{theorem}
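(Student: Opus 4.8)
The plan is to deduce both parts directly from the seminorm properties already established, so that essentially no new work is needed beyond unwinding definitions. First I would record that both functions in question are seminorms on $G_\qq$: by Theorem~\ref{limthm}(3) the function $\nu_{st}$ factors through $G_\qq$ as $\overline{\nu}_{st}$, which is nonnegative, multiplicative, and subadditive, hence a seminorm; and by Theorem~\ref{bdthm}(2) the function $|\sigma|$ is likewise a seminorm on $G_\qq$. Consequently $B_{\nu_{st}}$ and $B_{|\sigma|}$ are both unit balls of seminorms, and part (1) reduces to the general fact that the unit ball of any seminorm on a $\qq$-vector space is convex.

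For that general fact, I would take a seminorm $\mu$ on $V = G_\qq$ and points $x, y \in V$ with $\mu(x) \le 1$ and $\mu(y) \le 1$, together with a rational $t$ satisfying $0 \le t \le 1$; note that convex combinations in a $\qq$-vector space are exactly those with rational coefficients, which is the structure available on $G_\qq = G \otimes \qq$. Subadditivity gives $\mu(tx + (1-t)y) \le \mu(tx) + \mu((1-t)y)$, and multiplicativity for the nonnegative scalars $t$ and $1-t$ rewrites the right-hand side as $t\,\mu(x) + (1-t)\,\mu(y) \le t + (1-t) = 1$. Hence $tx + (1-t)y \in B_\mu$, proving convexity. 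Applying this to $\mu = \nu_{st}$ and to $\mu = |\sigma|$ establishes part (1). I would point out that this argument uses only homogeneity for nonnegative rational scalars, so the symmetry hypothesis $\nu(g) = \nu(-g)$ is not needed here.

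For part (2), the key input is Theorem~\ref{bdthm}(3): the pointwise hypothesis $\nu(g) \ge |\sigma(g)|$ on $G$ upgrades to the inequality $\nu_{st}(x) \ge |\sigma(x)|$ for all $x \in G_\qq$. Given this, the containment of unit balls is immediate: if $x \in B_{\nu_{st}}$, then $|\sigma(x)| \le \nu_{st}(x) \le 1$, so $x \in B_{|\sigma|}$, whence $B_{\nu_{st}} \subset B_{|\sigma|}$.

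I do not anticipate a genuine obstacle, since everything reduces to the earlier theorems and to the triangle inequality. The only point demanding care is conceptual rather than technical: convexity must be interpreted over $\qq$, using rational convex-combination coefficients, matching the $\qq$-vector space structure of $G_\qq$. Once that interpretation is fixed, both statements follow in a line or two, and the bulk of the ``proof'' is really the bookkeeping that identifies $\nu_{st}$ and $|\sigma|$ as seminorms via Theorems~\ref{limthm} and \ref{bdthm}.
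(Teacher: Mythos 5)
Your proof is correct, and it is essentially the argument the paper has in mind: the paper states this theorem without any proof, treating it as an immediate consequence of Theorems~\ref{limthm} and \ref{bdthm}, which is exactly the reduction you carry out (seminorm $\Rightarrow$ convex unit ball over $\qq$, and the pointwise bound from Theorem~\ref{bdthm}(3) $\Rightarrow$ containment of balls). Your attention to the fact that convexity must be read with rational coefficients, and that only nonnegative-scalar homogeneity is needed, supplies precisely the bookkeeping the paper leaves to the reader.
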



\section{Elementary examples}
We begin exploring the stable genus by computing its value for a few simple examples.  

\subsection{$g_{st}(4_1) = 0$}  The first example of a nonslice knot is the figure eight knot, $4_1$, as originally proved by Fox and Milnor~\cite{fm}.  Since $4_1$ is amphicheiral, $2(4_1)$ is slice, meaning that $g_4(2(4_1)) = 0$.  It follows immediately that  in taking limits, $g_{st}(4_1) = 0$.

\subsection{$g_{st}(3_1) = 1$}  The first knot of infinite order in $\calc$ is the trefoil, $3_1$, as originally proved by Murasugi~\cite{mu}.  Let $\sigma(K)$ denote   of the classical signature of $K$: the signature of $V + V^{\text{\sc T}} $ where $V$ is a Seifert matrix for $K$ and $V^{\text{\sc T}}$ its transpose.  Then we have the Murasugi bound, $g_4(K) \ge \frac{1}{2}| \sigma(K)|$.  Hence   Theorem~\ref{bdthm} applies to show that $g_{st}(3_1) \ge \frac{1}{2} | \sigma(3_1) |= 1$.  On the other hand, $g_4(3_1) = 1$, so $g_{st}(3_1) \le 1$.

\subsection{$g_{st}(3T_{2,7} - 2T_{2,11}) = 2$}  As a final example that illustrates a simple application of Tristram-Levine signatures~\cite{ le2, tr}, we consider the knot $3T_{2,7} - 2T_{2,11}$, where $T_{p,q}$ denotes the $(p,q)$--torus knot.  We will now apply Theorem~\ref{bdthm} to  $\sigma_t$ for appropriate $t$, where  $\sigma_t$ is  the Tristram-Levine signature~\cite{le2, tr}, defined by:
$$\sigma_t(K) = \text{signature}((1 - e^{2 i\pi t})V  +(1 - e^{-2i\pi t})V^{\text{\sc T}}).$$ (Formally, to achieve a concordance invariant  one forms the two-sided limit $\sigma'_t(K) = \lim_{\epsilon \to 0} \frac{1}{2} ( \sigma_{t - \epsilon}(K) + \sigma_{t + \epsilon}(K))$: then $\sigma'$ is a homomorphism on the concordance group for any specific value of $t$.) 
 For the knot $3T_{2,7} - 2T_{2,11}$ this signature function is graphed in Figure~\ref{figsiggraph}.   Since  the function is symmetric about $\frac{1}{2}$, we have graphed the portion of the function on the interval  $[0, \frac{1}{2}]$.

\begin{figure}[h]
\centerline{  \fig{.55}{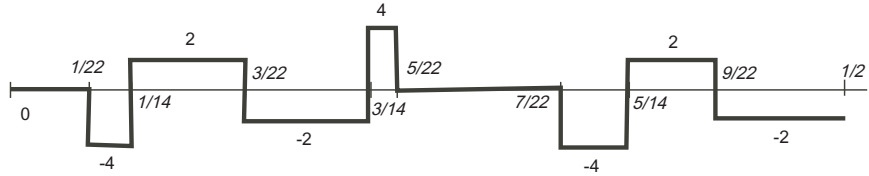}  }
\vskip.1in
\caption{Signature function for $3T_{2,7} - 2T_{2,11}$.} \label{figsiggraph}
\end{figure}

 If we let $x$ be any number between $\frac{3}{14}$ and $\frac{5}{22}$, then   the Tristram-Levine bound $g_{4}(K) \ge  \frac{1}{2}  |\sigma_x(K)|$ implies $g_{st}(K) \ge  \frac{1}{2}  |\sigma_x(K)|$. Thus we have that $g_{st}(3T_{2,7} - 2T_{2,11}) \ge 2$. On the other hand, the reader should have no trouble finding four band moves in the schematic diagram of $3T_{2,7} - 2T_{2,11}$ (Figure~\ref{diagramt23}) that converts it into the torus knot $T_{2,1}$ which is the unknot and in particular bounds a disk.  The corresponding surface in the 4--ball constructed by performing these band moves and capping off with the disk is of genus 2.  Thus $g_{st}(3T_{2,7} - 2T_{2,11}) \le 2$.

  \begin{figure}[h]
   \fig{.65}{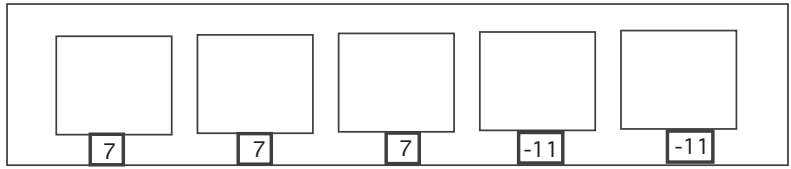} 
   \caption{Schematic diagram for $3T_{2,7} - 2T_{2,11}$}\label{diagramt23}
   \end{figure}

\section{Families of knots:  $xT_{2,7} + yT_{2,11}$.}

A nice illustrative example is given by restricting to the subspace $S$ of $\calc_\qq$ spanned by the torus knots $   T_{2,7}$ and $  T_{2,11}$.    We want to understand the unit ball of $g_{st}$ on $S$ in terms of the unit ball associated to the function Max$_{0\le t \le 1} \{\sigma_t\}$; for any particular example  it is more straightforward to directly analyze the signature function.  In the present case, the signature functions  for $T_{2,7}$ and $T_{2,11}$ are zero near $t = 0$ and increase by two at each of the jumps at the points $\{1/14, 3/14, 5/14\}$ and $\{1/22, 3/22, 5/22, 7/22, 9/22\}$, respectively.  For the readers convenience, we order the union of these two sets:  $$\{1/22, 1/14, 3/22, 3/14, 5/22, 7/22, 5/14, 9/22\}.$$  Evaluating the signature functions $\sigma_t(xT_{2,7} + yT_{2,11})$ at values between each of  these numbers and for some $t$ close to $\frac{1}{2}$  yields the following set of inequalities:

\begin{eqnarray*}
g_{st}(xT_{2,7} + yT_{2,11}) & \ge & |y|\\
g_{st}(xT_{2,7} + yT_{2,11})& \ge & | x + y|\\
g_{st}(xT_{2,7} + yT_{2,11}) & \ge & |x +2y|\\
g_{st}(xT_{2,7} + yT_{2,11})& \ge &  |2x + 2y|\\
g_{st}(xT_{2,7} + yT_{2,11}) & \ge &  |2x + 3y|\\
g_{st}(xT_{2,7} + yT_{2,11}) & \ge &  |2x + 4y|\\
g_{st}(xT_{2,7} + yT_{2,11})& \ge &  |3x + 4y|\\
g_{st}(xT_{2,7} + yT_{2,11})& \ge &  |3x + 5y|.\\
\end{eqnarray*}
 Based on these, we find that the unit ball $B_{{g}_{st}}$ restricted to the span of $T_{2,7}$ and $T_{2,11}$ is contained in the set illustrated in Figure~\ref{ballgraph}.

\begin{figure}[h]
\centerline{  \fig{.45}{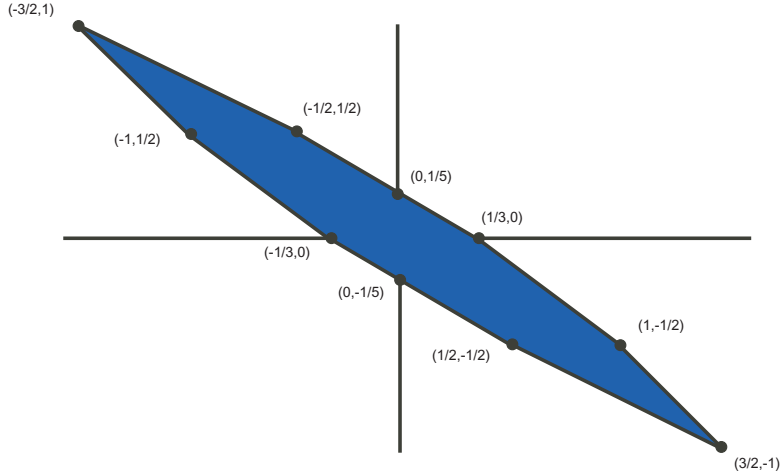}  }
\vskip.1in
\caption{The unit ball for $g_{st}$ in the span of $T_{2,7}$ and $T_{2,11}$.} \label{ballgraph}
\end{figure}

By convexity, to show that this set is actually the unit ball for $g_{st}$, we  need to check only the vertices.  For instance, we want to see that $g_{st}(\frac{3}{2}T_{2,7} - T_{2,11}) = 1$.  That is, we need to show $g_{st}(3T_{2,7} - 2T_{2,11}) = 2$.  That calculation was done in the previous section.  The other vertices are handled similarly.  (That is, one shows that  $g_4(T_{2,7}) = 3$, $g_4(T_{2,7} - T_{2,11}) = 2$, $g_4(3T_{2,7} - 2T_{2,11}) = 2$ and $g_4(2T_{2,7} - T_{2,11}) = 2$.  The point $(0, \frac{1}{5})$ is not a vertex so need not be considered.)

\vskip.1in
\noindent{\bf Note.}  Rick Litherland~\cite{lith} has   proved that for any pair of two stranded torus knots, the 4--genus of a linear combination $xT_{2,k} + yT_{2,j}$ is determined by its signature function.


\section{A smooth versus topological comparison: $xT_{3,7} + yT_{2,5}$.}
We now summarize a more complicated example of the computation of the $g_{st}$ unit ball on the 2--dimensional subspace spanned by $T_{3,7}$ and $T_{2,5}$.  The added complexity occurs because the signature function of $T_{3,7}$ does not determine its smooth 4--genus; this signature function has positive jumps at $1/21, 2/21,  4/21, 5/21$ and $ 8/21$ but a negative jump at  $   10/21$.  Thus its maximum value is 5, and its value at $t=\frac{1}{2}$ is 4.  On the other hand, the Ozsv\'ath-Szab\'o $\tau$ or Khovanov-Rasmussen invariant $s$ both take value 6, and thus determine the smooth 4--genus of $T_{3,7}$ to be 6.  (See~\cite{os, ra} for details.)

Considering only the signature function, we can show that the  unit  $g_{st}$ ball is contained within  the entire shaded region.  Using either $\tau$ or $s$ places additional bounds which eliminate the two thin darker triangles.  The innermost parallelogram represents points that we know are in the unit ball.

   \begin{figure}[h]
   \fig{.4}{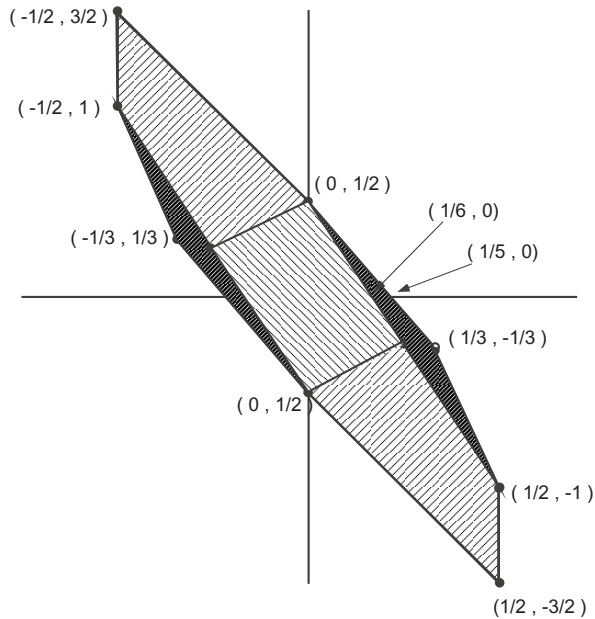} 
   \caption{Bounds on the topological and smooth unit ball for $g_{st}$ on the span $\left< T_{3,7} + yT_{2,5} \right>$.}\label{smooth-top}
   \end{figure} 

\vskip.1in

\noindent{\bf Note.}  Recent work has slightly enlarged the region which we know lies in the unit $g_{st}$ ball for the span of these two knots, but most of the region remains unknown.  We know of no knots in this span for which the topological and smooth 4--genus differ.


\section{A 4--dimensional example.}

As our final example  related to finding a $g_{st}$ unit ball, we consider the span of the first four  knots that are of infinite order in $\calc$:  $3_1$, $5_1$, $5_2$ and $6_2$.  If we identify the span of these with $\qq^4$ via the coordinates $x_1(3_1) + x_2(5_1) + x_3(5_2) +x_4(6_2)$, then the unit ball determined by the maximum of the signature function turns out to be a polyhedron formed as the convex hull of 24 points that come in antipodal pairs.  We list one from each pair:

\begin{enumerate}

\item $   (2, -1,0,0),   (0,1,-2,0),  (0,1,0,-1),  (2,-1,0,-1), (0,0,1,0),    (2,0,-1,0) $ \vskip.05in 

\item $ (0,1,0,-2)$ \vskip.05in

\item $ (2,1,-2,-2), (2,1,-2,-1), (0,1,-2,1) , (0,0,1,-2), (2,0,-1,-2)   $.  \vskip.05in
 
\end{enumerate}

Those in the first set of five have all been shown to have $g_4 = 1$.  For those in the last set we have been unable to compute the genus or stable genus.  For the second set, $(0,1,0,-2) $, we have been unable to compute the 4--genus, but we know that twice this knot has 4--genus 2, and hence its stable 4--genus is 1.  


\section{A knot with $g_{st}(K)$ near $ \frac{1}{2}$. Gilmer, Casson-Gordon bounds.}\label{seconehalf}

We begin by presenting  Gilmer's result~\cite{gi} bounding the 4--genus of a knot $K$ in terms of Casson-Gordon signature invariants~\cite{cg1}.  

Let $K$  be a  knot and let $M_d(K)$ denote its $d$--fold branched cover, with $d$ a prime power.   To each prime $p$ and character $\chi \co H_1(M_d(K), \zz) \to \zz_p$, there is the   Casson-Gordon invariant $\sigma(K,\chi) \in \qq$.    By~\cite{gi}, this invariant is additive under connected sum of knots and direct sums of characters.   A  special case of the main  theorem of~\cite{gi3} states the following:

\begin{theorem} If $K$ is an algebraically slice knot for which   $H_1(M_d(K), \zz)  \cong \zz_p^{2n}$ and $g_4(K) = g$, then  there is a subspace $H \subset \text{Hom}(H_1(M_d(K), \zz), \zz_p) \cong H^1(M_d(K), \zz_p)$ of dimension $\frac{1}{2} (2n-2(d-1)g) $ such that for all $\chi \in H$, $|\sigma(K, \chi) |\le 2dg$.
\end{theorem}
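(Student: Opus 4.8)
The plan is to realize the branched cover geometrically and read the bound off its twisted and untwisted signatures. Since $g_4(K)=g$, the knot $K$ bounds a smoothly embedded orientable surface $F\subset B^4$ of genus $g$ with $\partial F=K$. Let $W$ be the $d$--fold cyclic cover of $B^4$ branched over $F$; it is a compact oriented $4$--manifold with $\partial W=M_d(K)$, carrying a $\zz_d$ deck action. First I would record the homology of $W$. Multiplicativity of the Euler characteristic for branched covers gives $e(W)=d\,e(B^4)-(d-1)e(F)=1+2g(d-1)$. Since $W$ is connected with nonempty boundary we have $b_0=1$ and $b_4=0$, so $b_2(W)=2g(d-1)+b_1(W)+b_3(W)\ge 2g(d-1)$: the genus of the branch locus is exactly what inflates the middle homology of $W$.

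Next I would dispose of the ordinary signature. By the $G$--signature theorem (equivalently the Viro/Casson--Gordon formula for cyclic branched covers, using that $F$ is null--homologous rel boundary so there is no self--intersection correction), $\mathrm{sign}(W)$ depends only on the boundary knot and equals $\sum_{j=1}^{d-1}\sigma_{j/d}(K)$. Because $K$ is algebraically slice its Seifert form has a metabolizer, so every Tristram--Levine signature $\sigma_t(K)$ vanishes; hence $\mathrm{sign}(W)=0$. This is the only place the algebraic sliceness hypothesis enters, and it is what makes the final estimate clean.

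I would then produce $H$ by a half--lives--half--dies argument. A character $\chi\in\Hom(H_1(M_d(K)),\zz_p)\cong H^1(M_d(K);\zz_p)\cong\zz_p^{2n}$ can be computed against $W$ precisely when it extends over $W$, i.e.\ lies in the image of $r\co H^1(W;\zz_p)\to H^1(M_d(K);\zz_p)$. Poincar\'e--Lefschetz duality for the pair $(W,M_d(K))$ identifies this image, under the nonsingular linking pairing on $\zz_p^{2n}$, with the annihilator of $\ker\!\big(H_1(M_d(K);\zz_p)\to H_1(W;\zz_p)\big)$, and exhibits inside it an isotropic subspace on which, moreover, the twisted homology of $W$ in degrees $1$ and $3$ is forced to be small. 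Tracking the contribution of the genus--$g$ branch locus through the $\zz_p$--homology of $W$, this subspace has dimension $n-(d-1)g=\tfrac12(2n-2(d-1)g)$; I would take it to be $H$.

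Finally I would bound $\sigma(K,\chi)$ on $H$. For $\chi\in H$, extend it to $\tilde\chi\co H_1(W)\to\zz_p$ and set $\xi=e^{2\pi i/p}$; the defining construction of $\sigma(K,\chi)$, together with the additivity recorded in the statement, identifies it (up to the standard normalization) with the twisted signature defect of $W$, namely $\sigma(K,\chi)=\mathrm{sign}_\xi(W)-\mathrm{sign}(W)=\mathrm{sign}_\xi(W)$, using $\mathrm{sign}(W)=0$ from the second step. Since a signature is bounded by the rank of its underlying space, $|\mathrm{sign}_\xi(W)|\le\dim_\cc H_2(W;\cc_\xi)$; the rank--one twisted Euler characteristic equals $e(W)$, and with $\chi$ nontrivial $H_0(W;\cc_\xi)=H_4(W;\cc_\xi)=0$, so $\dim_\cc H_2(W;\cc_\xi)=1+2g(d-1)+\dim_\cc H_1(W;\cc_\xi)+\dim_\cc H_3(W;\cc_\xi)$. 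The control on twisted $H_1$ (hence on $H_3$ by duality) built into the choice of $H$ forces this to be at most $2dg=2g(d-1)+2g$, giving $|\sigma(K,\chi)|\le 2dg$. The hard part will be the coupled bookkeeping of the last two steps: one must choose a single subspace $H$ that simultaneously has dimension exactly $n-(d-1)g$, consists of characters extending over $W$, and has twisted first homology small enough that the twisted middle Betti number does not exceed $2dg$. Matching the defect $(d-1)g$ on the dimension side against the excess of $\dim_\cc H_2(W;\cc_\xi)$ over $e(W)$ on the signature side is the delicate point; the remaining ingredients are the standard $G$--signature and branched--cover machinery.
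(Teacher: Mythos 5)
This theorem is not proved in the paper at all: it is quoted as ``a special case of the main theorem of~\cite{gi3}'' (Gilmer), with the deck-transformation refinement coming from~\cite{gl}. So the only possible comparison is with Gilmer's argument, and your proposal does reconstruct its skeleton correctly: take the $d$--fold cover $W$ of $B^4$ branched over a genus-$g$ surface bounded by $K$, compute $e(W)=1+2g(d-1)$, identify $\operatorname{sign}(W)=\sum_{j=1}^{d-1}\sigma_{j/d}(K)$ (a Novikov-additivity/$G$--signature argument, valid for an arbitrary such surface), note these Tristram--Levine signatures vanish because $K$ is algebraically slice (legitimate here: finiteness of $H_1(M_d(K))$ means the $d$-th roots of unity are nonsingular values), and interpret $\sigma(K,\chi)$ for extending characters as the twisted signature defect of $W$. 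All of that matches the cited proof's framework.

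The genuine gap is exactly the step you flag as ``the hard part,'' and the mechanism you propose for it fails. You want to bound $|\sigma(K,\chi)|$ by $\dim_\cc H_2(W;\cc_\xi)$ and then force $\dim_\cc H_1(W;\cc_\xi)+\dim_\cc H_3(W;\cc_\xi)\le 2g-1$ by a good choice of $H$. Test this at $g=0$: the theorem then asserts $\sigma(K,\chi)=0$ on an $n$--dimensional subspace, but your rank bound can never do better than $|\sigma(K,\chi)|\le \dim_\cc H_2(W;\cc_\xi)\ge 1$ (since the twisted Euler characteristic is $1$ and $H_0$, $H_4$ vanish), and the requirement $\dim H_1+\dim H_3\le -1$ is vacuously impossible. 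So bounding the signature by the rank of the twisted middle homology cannot yield the constant $2dg$; Casson--Gordon and Gilmer instead produce, via Poincar\'e--Lefschetz duality with twisted coefficients, an isotropic subspace of the twisted intersection form of half the ``excess'' rank, which lowers the signature below the rank. Likewise, your dimension count $n-(d-1)g$ for the space of extending characters is asserted but not derived: it requires an actual computation of the $\zz_p$--homology of the branched cover $W$ (this is where the defect $(d-1)g$ enters, via $\dim_{\zz_p}H_1(W;\zz_p)$ and $\dim_{\zz_p}H_2(W;\zz_p)$), coupled with the ``half lives, half dies'' argument for the pair $(W,M_d(K))$. These two computations --- the $\zz_p$--homology of $W$ and the duality-produced isotropic subspaces in twisted homology --- are the actual content of Gilmer's theorem, and neither appears in the proposal.
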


It was observed in~\cite{gl} that $H$ can be assumed to be invariant under the deck transformation.  Applying this and specializing to the case of $d = 3$, we have:

\begin{corollary}\label{corgilmer} If $K$ is an algebraically slice knot for which  $H_1(M_3(K), \zz)  \cong \zz_p^{2n}$ and $g_4(K) = g$, then  there is a $\zz_3$--invariant subspace $H \subset \text{Hom}(H_1(M_3(K), \zz), \zz_p) \cong H^1(M_3(K), \zz_p)$ of dimension $  n-2g $ such that for all $\chi \in H$, $|\sigma(K, \chi) |\le 6g$.

\end{corollary}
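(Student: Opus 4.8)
The plan is to obtain the corollary from the theorem in two separate moves: a purely numerical specialization to $d=3$, followed by an equivariance argument that upgrades the subspace $H$ to a $\zz_3$--invariant one without any loss of dimension. First I would simply substitute $d=3$ into the conclusion of the theorem. The dimension $\frac{1}{2}(2n-2(d-1)g)$ becomes $\frac{1}{2}(2n-4g)=n-2g$, and the Casson--Gordon bound $2dg$ becomes $6g$. Thus the theorem already produces a subspace $H_0\subset \Hom(H_1(M_3(K),\zz),\zz_p)$ of dimension $n-2g$ on which $|\sigma(K,\chi)|\le 6g$. The only remaining issue is invariance under a generator $T$ of the deck group $\zz_3$.

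For the invariance I would return to the geometric origin of $H$ in the proof of the theorem, rather than trying to manipulate $H_0$ abstractly. Given a genus--$g$ surface $F\subset B^4$ with $\partial F = K$, one forms the $3$--fold cyclic branched cover $W\to B^4$ branched over $F$, so that $\partial W = M_3(K)$, and $H$ arises (essentially) as the subspace of characters that extend across this bounding $4$--manifold, i.e.\ the image of the restriction map $H^1(W;\zz_p)\to H^1(M_3(K);\zz_p)$; it is exactly for such extendable characters that the $G$--signature argument supplies the bound on $\sigma(K,\chi)$, and the dimension count $n-2g$ is read off from the homology of $W$. The key observation is that the deck transformation $T$ of $M_3(K)$ is the restriction to the boundary of the deck transformation of $W\to B^4$, and the restriction map is therefore $T$--equivariant. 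Consequently its image is automatically a $T$--invariant, hence $\zz_3$--invariant, subspace. This is precisely the refinement recorded in \cite{gl}, which I would cite for the equivariance step.

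The main obstacle I anticipate is precisely this last point, and it is genuinely not a formality: a general subspace $H_0$ of dimension $n-2g$ need contain no invariant subspace of comparable dimension (over $\zz_p$ a $\zz_3$--action can act by an order--$3$ rotation, for which a generic line is sent off itself and contains no nonzero invariant subspace), so one cannot simply pass to the invariant core $H_0\cap TH_0\cap T^2H_0$ of an abstractly given $H_0$. The resolution is that invariance must be structural rather than linear--algebraic: because $H$ is produced as the image of the equivariant restriction map from $H^1(W;\zz_p)$, it is invariant by construction, and the dimension $n-2g$ and the bound $6g$ obtained from specializing $d=3$ carry over verbatim. This yields the $\zz_3$--invariant subspace claimed in the corollary.
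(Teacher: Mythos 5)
Your proposal is correct and follows essentially the same route as the paper, which likewise obtains the corollary by substituting $d=3$ into Gilmer's theorem (yielding dimension $n-2g$ and bound $6g$) and citing the observation of \cite{gl} that $H$ may be taken invariant under the deck transformation. Your additional explanation of \emph{why} the invariance holds---that $H$ arises as the image of the deck-equivariant restriction map from the branched cover of $B^4$, so invariance is structural rather than recoverable by intersecting translates of an arbitrary subspace---is accurate and goes beyond the paper, which simply cites \cite{gl} for this point.
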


\noindent{\bf Example} Consider the knot illustrated in Figure~\ref{genknot}, which we denote $K(J_1, J_2)$.  This family of knots has been used throughout the study of knot concordance; a detailed description can be found, for instance, in~\cite{gl}, in which the details  of the results we now summarize can be found. First, the homology of the 3--fold branched cover is
  the direct sum of cyclic groups of order seven: $H_1(M_3(K(J_1, J_2))) \cong \zz_7 \oplus \zz_7$.  Furthermore, 
   the homology splits as the direct sum of $E_2 \cong \zz_7$
    and $E_4 \cong \zz_7$, the $2$--eigenspace  
   and $4$--eigenspace of the deck transformation.  (Note that $2^3 = 4^3 = 1 \mod 7$.) 
   
    Similarly, $H^*_1(M_3(K(J_1,J_2))) = \text{Hom}( H_1(M_3(K(J_1,J_2))) , \zz_p)$ splits as a direct sum of eigenspaces, which we denote $E^*_2$ and $E^*_4$.
  Using two  eigenvectors as a basis for $H^*_1(M_3(K(J_1,J_2)))$ and letting $\chi_{a,b}$ be the character corresponding to $(a,b)$ via this identification, as proved in~\cite{gl} we have:

\begin{theorem} \label{thmcg} $\sigma(K(J_1, J_2), \chi_{a,0}) =   \sigma_{a/7}(J_1) +   \sigma_{2a/7}(J_1)+ \sigma_{4a/7}(J_1)$; similarly, $\sigma(K(J_1, J_2), \chi_{0,b}) =   \sigma_{b/7}(J_2) +   \sigma_{2b/7}(J_2)+ \sigma_{4b/7}(J_2)$.  In particular, it follows that  $\sigma(K(J_1, J_2), \chi_{0,0}) = 0$.
 \end{theorem}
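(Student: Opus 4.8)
The plan is to compute these Casson-Gordon invariants by reducing to Litherland's satellite formula, using the explicit realization of $K(J_1,J_2)$ as an infection of a fixed model knot together with the eigenspace structure of $H_1(M_3(K))$ under the deck transformation. First I would recall the construction: $K(J_1,J_2)$ is built from the model knot $K(U,U)$ by tying the two bands of its genus--one Seifert surface into $J_1$ and $J_2$, equivalently by infecting along two curves $\eta_1,\eta_2$ that lie on the surface and satisfy $\text{lk}(\eta_i,K)=0$. Since each $\eta_i$ is null-homologous in the knot complement, it lifts to three disjoint curves $\widetilde\eta_i^{(0)},\widetilde\eta_i^{(1)},\widetilde\eta_i^{(2)}$ in $M_3(K)$, cyclically permuted by the order--three deck transformation $\tau$, with $\widetilde\eta_i^{(j)}=\tau^j\widetilde\eta_i^{(0)}$.

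The geometric input I would establish next is that $[\widetilde\eta_1^{(0)}]$ generates the summand $E_2$ while $[\widetilde\eta_2^{(0)}]$ generates $E_4$. This is precisely what decouples the two formulas: because $\chi_{a,0}$ is supported on $E^*_2$ it annihilates every class $[\widetilde\eta_2^{(j)}]\in E_4$, so the infection along $\eta_2$ contributes nothing to $\sigma(K,\chi_{a,0})$, and symmetrically for $\chi_{0,b}$. Then I would invoke Litherland's theorem, which expresses the Casson-Gordon invariant of an infected knot as the invariant of the model knot plus one Tristram-Levine signature $\sigma_t(J_i)$ for each lift $\widetilde\eta_i^{(j)}$, evaluated at the argument $t=\chi(\widetilde\eta_i^{(j)})/7$ obtained by reading the character value in $\zz_7$ as a seventh root of unity.

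The multipliers now fall out of the action of $\tau$. On $E_2$ the deck transformation acts as multiplication by $2$, so $[\widetilde\eta_1^{(j)}]=2^{\,j}[\widetilde\eta_1^{(0)}]$ and the three character values are $a,2a,4a \pmod 7$; substituting yields exactly $\sigma_{a/7}(J_1)+\sigma_{2a/7}(J_1)+\sigma_{4a/7}(J_1)$, once the model knot $K(U,U)$ is seen to contribute nothing (it is slice). On $E_4$ one has $\tau$ acting by $4$, and since $\{4^0,4^1,4^2\}\equiv\{1,4,2\}\pmod 7$ the same orbit $\{b/7,2b/7,4b/7\}$ reappears, giving the formula for $\chi_{0,b}$. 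The final assertion is then immediate: for $\chi_{0,0}$ every character value is $0$, so each argument is $0/7=0$, and $\sigma_0(J)=0$ for all $J$, whence the sum vanishes.

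I expect the main obstacle to be the bookkeeping in the middle step: verifying that Litherland's correction term is precisely a single Tristram-Levine signature per lift, with the stated argument, and confirming that the cyclic orbit under $\tau$ produces the multipliers $\{1,2,4\}$ rather than some other triple. This amounts to pinning down how the chosen eigenvector basis for $H^*_1(M_3(K))$ pairs with the classes $[\widetilde\eta_i^{(j)}]$, and checking that no residual contribution survives from the model knot.
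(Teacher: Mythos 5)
Your overall strategy---realize $K(J_1,J_2)$ as an infection of the model knot $K(U,U)$, apply Litherland's satellite formula for Casson--Gordon invariants, and track the three lifts of each infection curve through the eigenspace action of the deck transformation---is the right one, and it is essentially the argument of Gilmer--Livingston \cite{gl}, which is where the paper itself sends the reader for the proof (no proof is given in this paper). Your orbit computation is also correct: since $\{1,2,4\}$ is the subgroup of squares in $\zz_7^*$, acting by either eigenvalue $2$ or $4$ produces the same set of arguments $\{a/7,\,2a/7,\,4a/7\}$, and the decoupling of the two infections via the eigenspace supports of $\chi_{a,0}$ and $\chi_{0,b}$ is valid.

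The genuine gap is the parenthetical ``once the model knot $K(U,U)$ is seen to contribute nothing (it is slice).'' Sliceness does \emph{not} imply that Casson--Gordon invariants vanish: the Casson--Gordon vanishing theorem only gives $\sigma(K,\chi)=0$ for prime-power-order characters $\chi$ that vanish on \emph{one} metabolizer, namely the kernel associated to a particular slice disk. The characters you need killed---all of $E^*_2$ together with all of $E^*_4$---are not annihilated by any single metabolizer. Indeed, your own formula shows the principle is false within this very family: $K(U,J_2)$ is slice for every $J_2$ (surger the Seifert surface along the unknotted, $0$-framed core of the first band), yet the theorem being proved asserts $\sigma(K(U,J_2),\chi_{0,b})=\sigma_{b/7}(J_2)+\sigma_{2b/7}(J_2)+\sigma_{4b/7}(J_2)$, which is nonzero when $J_2$ is, say, a trefoil. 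The correct treatment of the base case is that $K(U,U)$ is ribbon in \emph{two} ways, by surgering along either band core, and the two resulting disks have metabolizers equal to the two different eigenspaces; one then applies Casson--Gordon vanishing (for order-$7$ characters) once for each disk. Identifying these metabolizers requires the same Seifert-matrix bookkeeping you use for the infection curves, with the added wrinkle that the metabolizer of the disk obtained by surgering the core of band $i$ is the eigenspace containing the lifts of the \emph{pushoffs} of that core, which is the opposite eigenspace from the one containing the lifts of the meridian of band $i$ (your infection curve---which, as a side correction, should be a curve in the complement of the Seifert surface encircling the band, not a curve lying on the surface). This base-case analysis is exactly the ``main obstacle'' you flagged but did not resolve, and it is where the real content of the computation lies.
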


\begin{figure}[h]
\centerline{  \fig{.6}{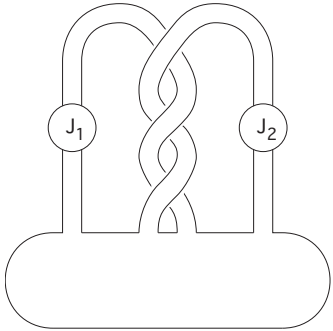}  }
\vskip.1in
\caption{The knot $K(J_1,J_2)$.} \label{genknot}
\end{figure}

We can now demonstrate that  particular knots in this family have $g_{st}(K(J_1, J_2))$ near $ \frac{1}{2}$.

\begin{theorem} For any $\epsilon >0$, there is a knot $J$ so that $\frac{1}{2} (1-\epsilon)\le g_{st}(K(J,-J)) \le\frac{1}{2}$.

\end{theorem}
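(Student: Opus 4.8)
The plan is to choose $J$ so that a single signature quantity $s=s(J)$ can be made arbitrarily large, and then to run Gilmer's bound (Corollary~\ref{corgilmer}) on the multiples $mK(J,-J)$ to force $g_4(mK(J,-J))$ to grow like $\tfrac{m}{2}$. Concretely, set
\[
s \;=\; \sigma_{1/7}(J)+\sigma_{2/7}(J)+\sigma_{3/7}(J),
\]
and take $J$ to be the connected sum of $N$ trefoils, so that $s=-4N$ and $|s|\to\infty$ with $N$. The first step is to evaluate the Casson--Gordon invariants of $mK:=mK(J,-J)$. Using Theorem~\ref{thmcg}, the additivity of $\sigma$ under connected sum and under direct sums of characters, and $\sigma_t(-J)=-\sigma_t(J)$, one checks the orbit identity $\sigma_{a/7}(J)+\sigma_{2a/7}(J)+\sigma_{4a/7}(J)=s$ for every nonzero $a\in\zz_7$ (this uses $\sigma_t=\sigma_{1-t}$ and $1$--periodicity to see that the three arguments always reduce to $\{1/7,2/7,3/7\}$). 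Hence for a character $\chi=(\chi_{a_i,b_i})_{i=1}^m$ on $M_3(mK)$,
\[
\sigma(mK,\chi)\;=\;s\bigl(\#\{i: a_i\neq 0,\ b_i=0\}-\#\{i:a_i=0,\ b_i\neq 0\}\bigr).
\]

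For the lower bound, note $mK$ is algebraically slice with $H_1(M_3(mK),\zz)\cong\zz_7^{2m}$, so in Corollary~\ref{corgilmer} we have $p=7$ and $n=m$. Writing $g=g_4(mK)$, the corollary produces a $\zz_3$--invariant subspace $H\subset H^1(M_3(mK),\zz_7)$ with $\dim H=m-2g$ and $|\sigma(mK,\chi)|\le 6g$ for all $\chi\in H$. The key structural step is that $H^1(M_3(mK),\zz_7)=V_2\oplus V_4$, where $V_2=\bigoplus_i (E_2^*)_i$ and $V_4=\bigoplus_i (E_4^*)_i$ are the two eigenspaces on which the deck transformation acts by its two distinct eigenvalues; since $H$ is $\zz_3$--invariant it must split as $H=(H\cap V_2)\oplus(H\cap V_4)$. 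On $V_2$ the displayed formula becomes $\sigma(mK,\chi)=s\cdot(\text{Hamming weight of }\chi)$, so the bound $|\sigma|\le 6g$ forces every vector of $H\cap V_2$ to have weight at most $6g/|s|$. A subspace of $\zz_7^m$ in which every vector has weight $\le w$ has dimension $\le w$ (a reduced--echelon basis exhibits a vector of weight equal to the dimension), whence $\dim(H\cap V_2)\le 6g/|s|$ and likewise $\dim(H\cap V_4)\le 6g/|s|$. Therefore $m-2g\le 12g/|s|$, that is
\[
g_4(mK)\;\ge\;\frac{m}{2}\cdot\frac{|s|}{|s|+6},
\]
and dividing by $m$ and letting $m\to\infty$ gives $g_{st}(K(J,-J))\ge \tfrac12\cdot\frac{|s|}{|s|+6}$. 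Choosing $N$ large enough that $|s|=4N\ge 6(1-\epsilon)/\epsilon$ yields $g_{st}(K(J,-J))\ge\tfrac12(1-\epsilon)$.

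For the upper bound it suffices, by the multiplicativity of $g_{st}$ (Theorem~\ref{limthm}) together with $g_{st}\le g_4$, to show $g_4(2K(J,-J))\le 1$; then $g_{st}(K(J,-J))=\tfrac12 g_{st}(2K(J,-J))\le\tfrac12 g_4(2K(J,-J))\le\tfrac12$. I would produce a genus--one cobordism from $2K(J,-J)=K(J,-J)\#K(J,-J)$ to the slice knot $K(J,-J)\#K(-J,J)$, which is slice because $K(-J,J)=-K(J,-J)$ (a relation consistent with the sign behaviour of the invariants computed above), by realizing the interchange of the infection knots $J$ and $-J$ between the two summands with a single handle; capping the resulting slice knot with a disk then exhibits $2K(J,-J)$ as the boundary of a genus--one surface in $B^4$.

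The lower bound is essentially forced once the Casson--Gordon computation and the eigenspace splitting are in place; the delicate point there is the reliance on the $\zz_3$--invariance of $H$, since without it the ``diagonal'' characters (those with $a_i=b_i$) would span an $m$--dimensional subspace on which $\sigma(mK,\cdot)$ vanishes identically and the dimension estimate would collapse. I expect the main obstacle to be the upper bound: verifying that the swap of the two infection bands is achieved by a genus--one cobordism requires a direct analysis of the explicit construction of $K(J_1,J_2)$ from~\cite{gl}, and does not follow formally from the invariants alone.
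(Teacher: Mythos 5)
Your lower bound is correct and is essentially the paper's own argument: Gilmer's bound (Corollary~\ref{corgilmer}) applied to $mK(J,-J)$, the splitting of the $\zz_3$--invariant subspace $H$ into deck-transformation eigenspaces, the echelon-basis observation that a subspace of dimension $a$ in $\zz_7^m$ contains a vector with at least $a$ nonzero coordinates, and the Casson--Gordon computation of Theorem~\ref{thmcg}. Recasting it as the direct inequality $g_4(mK) \ge \frac{m}{2}\cdot\frac{|s|}{|s|+6}$ rather than a proof by contradiction is a mild expositional improvement, and tracking the full orbit sum $s = \sigma_{1/7}(J)+\sigma_{2/7}(J)+\sigma_{3/7}(J)$ is slightly more flexible than the paper's requirement that each of $\sigma_{1/7}(J), \sigma_{2/7}(J), \sigma_{3/7}(J)$ separately exceed a constant $M$: your condition is met by connected sums of trefoils (for which $\sigma_{1/7}(3_1)=0$, so the paper's condition fails and it uses sums of $(2,k)$--torus knots instead). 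One overreach: your displayed formula for $\sigma(mK,\chi)$ at an arbitrary character implicitly asserts that a summand with both $a_i \ne 0$ and $b_i \ne 0$ contributes zero, which Theorem~\ref{thmcg} does not give--it computes only characters supported on a single eigenspace. This is harmless because you invoke the formula only on $H \cap V_2$ and $H \cap V_4$, where all $b_i$ (resp.\ all $a_i$) vanish, but the general statement should be removed.

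The genuine gap is the upper bound, exactly where you predicted. Your route rests on two unproved claims: that $K(-J,J) = -K(J,-J)$, and that the ``interchange of infection knots'' between the two summands is realized by a single handle. Neither is automatic: reverse-mirroring $K(J_1,J_2)$ negates the Seifert form of the pattern as well as replacing the companions by their reverse mirrors, so identifying the result with $K(-J_1,-J_2)$ (or $K(-J_2,-J_1)$) requires a symmetry of the specific diagram in Figure~\ref{genknot} that you have not established; and even granting sliceness of $K(J,-J)\# K(-J,J)$, no construction of a genus-one cobordism effecting the swap is given--changing which band carries $J$ versus $-J$ is not a local move. None of this machinery is needed. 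The paper's argument is a one-step surface surgery: on the evident genus-two Seifert surface for $2K(J,-J)$ there is a curve with framing $0$ representing $J \# -J$ (the band sum of the core of the $J$-band in one summand with the core of the $-J$-band in the other; the diagonal Seifert framings vanish and the two summands do not link), and since $J \# -J$ is slice, surgering the Seifert surface along a slice disk pushed into $B^4$ yields a genus-one surface bounded by $2K(J,-J)$. Hence $g_4(2K(J,-J)) \le 1$ and $g_{st}(K(J,-J)) \le \frac{1}{2}$ immediately, with no appeal to any symmetry of the pattern.
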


\begin{proof} By the additivity of $3$--genus,  for any knot $J$ we have $g_3(2K(J,-J)) = 2$.  On the evident Seifert surface for  $2K(J,-J) $ there is a curve on the surface with framing 0 representing the knot $J \cs -J$, which is slice.  Thus, the Seifert surface can be surgered in the 4--ball to give a surface of genus one bounded by $2K(J,-J)$.  Therefore, $g_4(2K(J, -J)) \le 1$ and $g_{st}(K(J,-J)) \le \frac{1}{2}$.

We now proceed to show that for each  $\epsilon$ there is some $J$ for which $g_{st}(K(J,-J)) \ge \frac{1}{2}(1 - \epsilon)$.  For a given $J$, if this is inequality is false, then for  some $n >0$, $g_4(nK(J,-J)) < \frac{1}{2}(1-\epsilon)n$.  (Since this holds for some $n$, it holds for all $n$ sufficiently large.)  For this $n$, we have $H_1(M_3(nK(J,-J))) = \zz_7^{2n}$. Applying Corollary~\ref{corgilmer} we find the relevant subgroup $H$ has dimension $\dim (H) >  n - 2(\frac{1}{2}(1-\epsilon)n).$
Simplifying, we have $\dim (H) >  \epsilon n$.

  Since $H_1( M_3(K(J,-J)))$ splits as the direct sum of a $2$--eigenspace and a $4$--eigenspace, the same is true for $ H_1(M_3(n K(J,-J)))$.  Thus, we also have an eigenspace splitting of $H^*_1(M_3(n K(J,-J)))$. The subspace $H$ given by Corollary~\ref{corgilmer} is invariant under the deck transformation, so it too must split as the sum of eigenspaces,  $ H = H_2 \oplus H_4$.  Given that  $\dim (H) >  \epsilon n$, one of these must have dimension at least $\frac{1}{2} \epsilon n$.  We will assume   $\dim (H_2) > \frac{1}{2}  \epsilon n$; the case  $\dim (H_4) > \frac{1}{2}  \epsilon n$ is similar.

We next use the fact, easily established using the Gauss-Jordan algorithm, that a subspace of dimension $a$ in $\zz_p^b$ contains some vector with at least $a$ nonzero coordinates.  Thus, $H_2$ contains a vector $h$ with at least  $\frac{1}{2}  \epsilon n$ nonzero coordinates.

For the character $\chi_h$ given by $h$, by the additivity of Casson-Gordon invariants and Theorem~\ref{thmcg}, $$\sigma(K, \chi_h) =\sum \sigma(K(J, -J), \chi_{a_i,0}) = \sum \sigma_{\frac{a_i}{7}}(J)$$ where the sum has at least $ \frac{1}{2}  \epsilon n$ elements and each $a_i = 1, 2, $ or $3$.  
Now, letting $M> 0 $ be a fixed constant assume that $\sigma_{\frac{a_i}{7}}(J) > M$ for $a_i = 1, 2, 3$.  Such a $J$  is easily constructed  using the connected sum of $(2,k)$-torus knots.  Then $\sigma(nK(J,-J), \chi_h)  > \frac{1}{2} \epsilon n M$.  Thus, we will have a contradiction to Corollary~\ref{corgilmer}  if $\frac{1}{2} \epsilon n M \ge 6( \frac{1}{2}(1-\epsilon)n) $.  Simplifying, we find that there is a contradiction if $M \ge 6   (\frac{1-\epsilon}{\epsilon}) $.  In conclusion, if $\sigma_\frac {a}{7} \ge    6   (\frac{1-\epsilon}{\epsilon}) $ then $g_{st}(K) \ge (1-\epsilon)\frac{1}{2}$.  

In the case that we are working with the $4$ --eigenspace instead of the $2$--eigenspace, the same condition appears, since Corollary~\ref{corgilmer} concerns the absolute value of the Casson-Gordon invariant, and switching eigenspaces simply interchanges $\sigma_\frac{a}{7}(J)$ with  $\sigma_\frac{a}{7}(-J)$.

\end{proof}

\subsection{Other non-integer examples.} The knot $K(J,-nJ)$ illustrated in Figure~\ref{genknot2} can be shown to satisfy $g_4((n+1) K(J,-nJ)) \le \frac{n}{n+1}$, in much the same way as the special case of $n=1$, $K(J,-J)$.  Thus, $g_{st}(K(J,-nJ) \le    \frac{n}{n+1}$.  The argument used above, based on the 3--fold cover, cannot be successfully applied to find a lower bound.  However, using the 2--fold cover we have been able to prove a weaker result.  Given $n$, there is a $J$ so that $\frac{n-1}{n} \le g_{st}(K(J,-nJ)) \le  \frac{n}{n+1}$.

\begin{figure}[h]
\centerline{  \fig{.6}{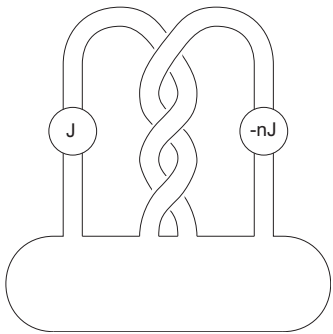}  }
\vskip.1in
\caption{The knot $K(J_1,J_2)$.} \label{genknot2}
\end{figure}

\section{Question.}\label{secquestions}

\begin{enumerate}

\item  Is $g_{st}$ a norm on $\calc_\qq$?  That is, if $g_{st}(K) = 0$, does $K$ represent torsion in $\calc$?\vskip.1in

\item  Is there a knot $K$ such that $0 < g_{st}(K) < \frac{1}{2}$?  This question relates to that of finding torsion of order greater than 2 in $\calc$.  For instance, if there is a knot $K$ of order three, then $g_4(3K) =0$.    A simpler question than that of finding such a knot is to find a knot   satisfying $g_4(3K) = 1$ but $g_4(2K) \ge 2$.
\vskip.1in

\item Is $g_{st}(K) \in \qq$ for all $K$?  Presumably the examples constructed in the previous section satisfy $g_{st} = \frac{n}{n+1}$   for some $n$, though this seems difficult to prove.\vskip.1in

\item Related to this previous question, is there a knot for which $g_{st}(K) \ne g_4(nK)/n$ for any $n$?  \vskip.1in

\item  Let $\{K_i\}$ be finite set of knots and let $S$ be the span of these knots in $\calc_\qq$.  Is the $g_{st}$ ball in $S$ a finite sided polyhedron? \vskip.1in

\item For some pair of distinct nontrivial positive torus knots, $T_{p,q}$ and $T_{p', q'}$, with $p,  q, p', q' >2$, determine the unit $g_{st}$ ball on their span in $\calc_\qq$, in either the smooth or topological category.

\end{enumerate}

\subsection{Stable commutator length}  If $g \in [G,G]$ is an element in the commutator subgroup of a group $G$, it can be expressed as a product of commutators.  The shortest such expression for $g$ is called the commutator length, $cl(g)$.    The limit $\lim_{n \to \infty}  {cl(g^n)}/{n}$ is called the stable commutator length.  The notion was first studied in~\cite{ba}.  Although no formal connections between this and the stable 4--genus are known at this time, the possibility of such connections is provocative.  We note that Calegari's work~\cite{cal} has revealed much of the behavior of the stable commutator length for  free groups. In particular, the stable commutator length is always rational for free groups, though this is not true for all groups~\cite{zhu}.  Further details can be found in~\cite{cal2}


\appendix

\section{Limits}\label{appendix}  We sketch the proof of Theorem~\ref{limthm}, restated as follows.

\begin{prop} Let $f \co \zz_+ \to R_{\ge 0}$ satisfy $f(nm) \le nf(m)$ for all $n$ and $m$.  Then $\lim_{n \to \infty} f(n)/n$ exists.

\end{prop}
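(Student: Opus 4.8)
The plan is to prove that the limit exists by identifying it with the infimum $L := \inf_{n \ge 1} f(n)/n$, following the classical Fekete-type argument. First I would note that $L$ is well defined and satisfies $0 \le L < \infty$, since $f \ge 0$ forces every quotient $f(n)/n$ to be nonnegative. This gives $\liminf_{n\to\infty} f(n)/n \ge L$ for free, so the whole problem reduces to establishing the reverse inequality $\limsup_{n\to\infty} f(n)/n \le L$.

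To get the upper bound I would fix $\epsilon > 0$ and choose $m$ with $f(m)/m < L + \epsilon$, then write an arbitrary $n$ via division with remainder as $n = qm + r$ with $0 \le r < m$ (adopting $f(0)=0$). The key estimate I am aiming for is $f(n) \le q\,f(m) + f(r)$, which upon dividing by $n$ yields
\[
\frac{f(n)}{n} \;\le\; \frac{qm}{n}\cdot\frac{f(m)}{m} + \frac{f(r)}{n}.
\]
As $n \to \infty$ with $m$ held fixed, $qm/n \to 1$ while $f(r)/n \to 0$, because $r$ ranges over the finitely many values $0,1,\dots,m-1$ and so $f(r)$ stays bounded. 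Hence $\limsup_n f(n)/n \le f(m)/m < L + \epsilon$, and letting $\epsilon \to 0$ completes the argument, showing the limit exists and equals $L$.

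The main obstacle is securing the estimate $f(n) \le q\,f(m) + f(r)$. The hypothesis $f(nm) \le n f(m)$ delivers only the ``pure multiple'' bound $f(qm) \le q\,f(m)$; by itself it controls $f(n)/n$ merely along the multiples of a fixed $m$ and gives no handle on the remainder term, so it is not enough to combine $f(qm)$ with $f(r)$ (indeed, taking $f(1)=1$, $f(p)=p$ on primes, and $f\equiv 0$ on composites satisfies $f(nm)\le n f(m)$ while $f(n)/n$ oscillates between $1$ at primes and $0$ at composites). What closes the gap is the \emph{additive} subadditivity $f(a+b) \le f(a)+f(b)$, from which $f(qm+r) \le f(qm)+f(r) \le q\,f(m)+f(r)$ follows at once. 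This is exactly the property available in the setting of Theorem~\ref{limthm}: there the relevant function is $f(n) = \nu(ng)$ for a fixed $g$, and it inherits $f(a+b) = \nu((a+b)g) \le \nu(ag)+\nu(bg) = f(a)+f(b)$ directly from the subadditivity of $\nu$. So I would carry out the Fekete estimate above using this additive bound as the load-bearing step, with the remainder analysis being the only point that requires care.
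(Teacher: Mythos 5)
Your proposal is correct and, at its core, runs the same Fekete-type argument as the paper: identify the limit with $L=\inf_{n}f(n)/n$, pick $m$ (the paper's $N$) nearly achieving the infimum, split $n=qm+r$ by division with remainder, and absorb the bounded remainder term as $n\to\infty$. The genuinely different --- and valuable --- element is your diagnosis of the hypothesis. You are right that the stated condition $f(nm)\le n\,f(m)$ does not by itself yield the key estimate $f(qm+r)\le q\,f(m)+f(r)$, and your counterexample ($f(1)=1$, $f(p)=p$ at primes, $f\equiv 0$ at composites, which satisfies the multiplicative bound since every product of two integers $\ge 2$ is composite) shows the proposition as literally worded is false. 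The paper's own proof silently uses the stronger additive property --- it writes ``by subadditivity we have $f(n)\le af(N)+f(b)$'' --- even though only the multiplicative bound appears in the hypotheses; as you observe, the additive bound is exactly what holds in the intended application, where $f(n)=\nu(ng)$ inherits $f(a+b)\le f(a)+f(b)$ from the subadditivity of $\nu$ on the group. So your write-up does not merely reproduce the paper's argument; it repairs the statement of the proposition by making explicit the hypothesis its proof actually requires.
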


\begin{proof} Let $L$ be the greatest lower bound of $\{f(n)/n\}_{n \in \zz_+}$. For any $\epsilon$ there is an $N$ such that $f(N)/N \le L + \frac{\epsilon}{2}$.   Any $n$ can be written as $n = aN + b$ where $0\le b <N$.  Also, $f(b) \le B = \max\{f(b)\}_{0 \le b < N}$.  By subadditivity we have  $f(n) \le af(N) + f(b)$.  Dividing by $n$ we have
$$ \frac{f(n)}{n} \le \frac{af(N)}{aN + b} + \frac{f(b)}{aN + b}  \le \frac{ f(N)}{ N  } + \frac{B} { aN }.$$ Thus, if $n$ is chosen large enough that $\frac{B}{aN} \le \frac{\epsilon}{2}$, (for instance, choose  $n \ge \frac{2B}{\epsilon} + N$) we have $f(n)/n \le L + \epsilon$.

\end{proof}


 \end{document}